\documentclass[11pt]{article} 
\usepackage{graphicx}
\usepackage{latexsym}
\usepackage{amsmath}
\usepackage{amssymb}
\usepackage{amsthm}
\usepackage{verbatim}
\usepackage{tikz}
\usepackage{bm}

\newcommand{\CP}{{\cal P}}
\newcommand{\CN}{{\cal N}}

\title{Forbidden-Total-Size Nim}
\author{Hiromi Oginuma\ \thanks{Nara Women's University.
{\tt xah\_oginuma@cc.nara-wu.ac.jp}} \and
Masato Shinoda \thanks{Nara Women's University. {\tt shinoda@cc.nara-wu.ac.jp}}}

\date{\today}

\begin{document}

\theoremstyle{definition} 
\newtheorem{theorem}{Theorem}[section]
\newtheorem{definition}[theorem]{Definition}
\newtheorem{lemma}[theorem]{Lemma}
\newtheorem{proposition}[theorem]{Proposition}
\newtheorem{corollary}[theorem]{Corollary}
\newtheorem{example}[theorem]{Example}
\newtheorem{remark}[theorem]{Remark}
\newtheorem{conjecture}[theorem]{Conjecture}
\def\theequation{\thesection.\arabic{equation}}

\maketitle              
\begin{abstract}
We consider a variant of Nim in which, for a fixed set $S$ of nonnegative integers, a move is forbidden if the total number of remaining stones belongs to $S$. This game coincides with normal-play Nim when $S=\emptyset$, and with mis\`ere Nim when $S=\{0\}$. In this paper, we focus in particular on the case where $S$ is the set of multiples of $d$, and show that simple criteria for determining the outcome can be obtained for $d=2,3,4$.
\end{abstract}

\section{Introduction}

Nim is one of the best-known classical combinatorial games. In its most basic form, the game consists of three piles of stones. Two players take turns, and on each turn, a player chooses one pile and removes any positive number of stones from it. A position of the game can be represented by a triple $(x,y,z)$, where $x,y,z$ are nonnegative integers denoting the numbers of stones in the three piles. For example, from the position $(3,5,7)$, the player to move can reach $(3,2,7)$ or $(0,5,7)$ in one move.

The rule under which the player who removes the last stone wins, or equivalently, the player who has no legal move on their turn loses, is called {\it normal} play. In contrast, the rule under which the player who removes the last stone loses, or equivalently, the player who has no legal move on their turn wins, is called {\it mis\`ere} play. For example, from the position $(3,0,0)$, the player to move can win by moving to $(0,0,0)$ under normal play, and by moving to $(1,0,0)$ under mis\`ere play. These games can be defined in the same way when the number of piles is greater than three.

Mis\`ere Nim can also be viewed as a game in which moves that leave no stones are forbidden, and the player who has no legal move on their turn loses. Motivated by this observation, we introduce a new variant as follows. Fix a subset $S$ of the nonnegative integers, and forbid any move that results in a position in which the total number of remaining stones belongs to $S$. The player who has no legal move on their turn loses. We call this game {\bf Forbidden-Total-Size Nim} ({\bf FTS Nim}). When $S=\emptyset$, the game coincides with normal-play Nim, while when $S=\{0\}$, it coincides with mis\`ere Nim.

As a special case of FTS Nim, we focus in this paper on the case where $S$ is the set of multiples of $d$, and call the resulting game $d$-FTS Nim. In $d$-FTS Nim, we assume that $d$ is an integer with $d\geq 2$. Formally, however, if we set $d=0$, the resulting game can be regarded as mis\`ere Nim.

As a first step in the study of $d$-FTS Nim, in Section 2 we give a criterion for determining the outcome in the case of two piles (Theorem 2.3). In Section 3, for each of $d=2,3,4$, we give a criterion for determining the outcome for $n$ piles (Theorems 3.1, 3.2, and 3.3). For $d\geq 5$, however, it is difficult to obtain such a general criterion. We therefore restrict our attention to three piles in Section 4 and establish several results that hold for general $d$. When $d=2^k$, we are able to obtain an explicit criterion for determining the outcome (Theorem 4.4). Finally, in Section 5, returning to the original definition of FTS Nim, we present some preliminary results for the case $S=\{0,1\}$.

\section{Basic Properties of Forbidden-Total-Size Nim}

\subsection{Sets of $\CP$-positions under Normal Play and Mis\`ere Play}

In this section, we describe the rules of FTS Nim and present the result for the simplest case, in which there are two piles. Before doing so, we review normal play and mis\`ere play, which are the basic forms of Nim, and introduce the nim-sum, an important tool for the analysis of Nim.

Let the number of piles be $n$, where $n\geq 2$, and let the set of positions be
$\mathbb{X}=\{\bm{x}=(x_1,x_2,\ldots,x_n)\mid x_i \text{ is a nonnegative integer for each } i\}$.  
For $\bm{x}=(x_1,x_2,\ldots,x_n)\in\mathbb{X}$, let $|\bm{x}|=\sum_{i=1}^{n}x_i$
denote the total number of stones in the position $\bm{x}$. Two players take turns, and on each turn, a player chooses one pile and removes any positive number of stones from it. 
If a position $\bm{x}$ can be changed to a position $\bm{x}'$ in a single move, we write $\bm{x}\to\bm{x}'$. If this move removes stones from the $i$th pile, changing its size from $x_i$ to $x'_i$, we write $x_i\to x'_i$; in this case, $x_i>x'_i$. 
The players alternate turns, and the game ends when no stones remain.

There are two ways to determine the winner. Under normal play, the player who removes the last stone wins, whereas under mis\`ere play, the player who removes the last stone loses.
Since these games are finite two-player zero-sum games of perfect information with no draws, every position can be classified as either an $\CN$-position, in which the player to move has a winning strategy, or a $\CP$-position, in which the player who is not to move has a winning strategy.

To classify positions in Nim, let $a$ and $b$ be nonnegative integers with binary representations
$a=\sum_i 2^i a_i, b=\sum_i 2^i b_i$.
The nim-sum (exclusive OR) of $a$ and $b$, denoted by $a\oplus b$, is defined by
$a\oplus b=\sum_i 2^i((a_i+b_i)\bmod 2)$,
that is, as addition in binary without carrying. Since the nim-sum is associative, so that
$(a\oplus b)\oplus c=a\oplus(b\oplus c)$,
it can be defined analogously for three or more nonnegative integers. Using the nim-sum, it is well known that the condition for a position $\bm{x}\in\mathbb{X}$ to be a $\CP$-position can be expressed as follows.

\begin{theorem}[Set of $\CP$-positions in Nim]
Define the following subsets of $\mathbb{X}$:
\begin{eqnarray*}
P_{N}&=& \{ \bm{x}\in\mathbb{X} \mid x_{1}\oplus x_{2}\oplus\cdots\oplus x_{n}=0\},\\
P_{M}&=& \{\bm{x}\in\mathbb{X} \mid \exists i,\ x_{i}\geq 2
\mbox{ and } x_{1}\oplus x_{2}\oplus\cdots\oplus x_{n}=0\}\\
&&\cup \ \{\bm{x}\in\mathbb{X} \mid \forall i,\ 0\leq x_{i}\leq 1
\mbox{ and } |\bm{x}| \mbox{ is odd}\}.
\end{eqnarray*}
Then $P_{N}$ is the set of $\CP$-positions in normal-play Nim, and $P_{M}$ is the set of $\CP$-positions in mis\`ere Nim.
\end{theorem}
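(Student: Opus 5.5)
The plan is to use the standard characterization of $\CP$-positions in a finite game without draws. A set $P\subseteq\mathbb{X}$ is exactly the set of $\CP$-positions if it satisfies three conditions: (i) every terminal position (one with no legal move) is in $P$ for normal play, or, in the misère formulation, terminal positions lie outside $P$ because the player unable to move wins; (ii) no move leads from a position of $P$ to another position of $P$; (iii) from every nonterminal position outside $P$ there is a move into $P$. The proof is then by induction on $|\bm{x}|$. Since every move strictly decreases $|\bm{x}|$, the induction is well founded.

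For normal play, take $P=P_N$. The only terminal position is $\bm{0}$, and $\bm{0}\in P_N$. For (ii), a move changes a single $x_i$ to some $x_i'\neq x_i$, so the nim-sum changes from $0$ to $x_i\oplus x_i'\neq 0$. For (iii), suppose $s=x_1\oplus\cdots\oplus x_n\neq 0$. Let $k$ be the position of the highest $1$-bit of $s$, and choose $i$ such that $x_i$ has a $1$ in bit $k$. Then $x_i\oplus s<x_i$, because bit $k$ is cleared and higher bits are unchanged. The move $x_i\to x_i\oplus s$ is therefore legal and produces nim-sum $0$.

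For misère play, it is cleanest to view the game as "$\bm{0}$ is a winning terminal position for the player to move." Equivalently, $\bm{0}$ is an $\CN$-position, and $\bm{0}\notin P_M$ because $0$ stones is an even total. Condition (ii) on $P_M$ needs two cases.

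\begin{itemize}
\item Suppose some $x_i\ge2$ and the nim-sum is $0$. Then at least two piles are $\ge2$, since a single large pile would leave a nonzero high bit. After one move at least one pile is still $\ge2$, and the nim-sum is nonzero, so the new position is outside both parts of $P_M$.
\item Suppose all piles are $\le1$ with odd total. Any move reduces the total by one, giving an even total with all piles $\le1$, which is outside $P_M$.
\end{itemize}

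Condition (iii) needs three cases for a position $\bm{x}\notin P_M$ with $\bm{x}\neq\bm{0}$.

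\begin{itemize}
\item If all piles are $\le1$, the total is even and positive, so removing one pile of size $1$ gives an odd total.
\item If exactly one pile is $\ge2$, reduce that pile to $0$ or to $1$, choosing whichever makes the number of $1$-piles odd. This lands in the second part of $P_M$.
\item If at least two piles are $\ge2$, the nim-sum is nonzero, since otherwise $\bm{x}\in P_M$. Make the normal-play move above. The moved pile was the only one changed, so another pile $\ge2$ survives, and the result lies in the first part of $P_M$.
\end{itemize}

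The main obstacle is the bookkeeping in the misère case. In particular, one must justify that a zero nim-sum with some pile $\ge2$ forces at least two piles $\ge2$: the highest bit of the largest pile must be matched by another pile, and that bit is at least bit $1$. One must also check that the normal-play reduction never destroys the last pile $\ge2$. Once these two facts are in place, everything else is routine parity checking.
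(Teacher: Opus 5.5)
Your proof is correct and complete. The paper gives no proof of this theorem and simply attributes it to Bouton; your argument is the standard one, with your normal-play steps being exactly the properties (N1) and (N2) the paper extracts, and with the same verification template (no move within $P$, and a move into $P$ from every nonterminal position outside $P$) that the paper uses in all its later proofs. Your explicit treatment of $\bm{0}$ as an $\CN$-position is also the right convention for $P_M$ as stated: under the paper's reinterpretation of mis\`ere Nim as FTS Nim with $S=\{0\}$, the all-zero position would instead be a $\CP$-position lying in $\mathbb{X}_S$.
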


These results were obtained by Bouton \cite{bou02}. In particular, the following two properties of the nim-sum will be used repeatedly. 
For a position
$\bm{x}=(x_1,x_2,\ldots,x_n)$, the following hold.
\begin{itemize}
\item[(N1)] If $x_{1}\oplus x_{2}\oplus\cdots\oplus x_{n}=0$, then removing stones from any pile, that is, for any move $x_i\to x'_i$, yields $x_{1}\oplus x_{2}\oplus\cdots\oplus x'_i\oplus\cdots\oplus x_{n}\neq 0$.
\item[(N2)] If
$x_{1}\oplus x_{2}\oplus\cdots\oplus x_{n}\neq 0$,
then it is possible to remove an appropriate number of stones from some pile, that is, there exists a move $x_i\to x'_i$, such that $x_{1}\oplus x_{2}\oplus\cdots\oplus x'_i\oplus\cdots\oplus x_{n}=0$.
\end{itemize}
See also Berlekamp, Conway and Guy \cite{ber01}, and Siegel \cite{sie13}.

\subsection{Forbidden-Total-Size Nim}

We interpret mis\`ere Nim as a game in which moves that leave no stones are forbidden and the player who has no legal move on their turn loses, and consider the following extension of this rule. Let $S$ be a subset of the nonnegative integers, and define $\mathbb{X}_S=\{\bm{x}\in\mathbb{X}\mid |\bm{x}|\in S\}$, 
the set of positions in which the total number of remaining stones belongs to $S$.

\begin{definition}[Forbidden-Total-Size Nim (FTS Nim)]
There are $n$ piles of stones, and two players take turns. On each turn, a player chooses one pile and removes any positive number of stones from it. However, any move resulting in a position in $\mathbb{X}_S$ is forbidden. The player who has no legal move on their turn loses. We call the game under this rule Forbidden-Total-Size Nim (FTS Nim).
\end{definition}

Note that, under this rule, no position in $\mathbb{X}_S$ can occur during the course of a game, although we allow a game to start from a position in $\mathbb{X}_S$. Throughout the theorems in this paper, we use $P_0$ to denote the set of $\CP$-positions contained in $\mathbb{X}_S$, $P_1$ to denote the set of $\CP$-positions not contained in $\mathbb{X}_S$, and $P$ to denote the set of all $\CP$-positions, so that $P=P_0\cup P_1$.

The aim of this study is to give a criterion for determining whether a given position $\bm{x}\in\mathbb{X}$ is a $\CP$-position or an $\CN$-position. In this paper, we focus in particular on the case where $d\geq 2$ is an integer and $S$ is the set of multiples of $d$, which we call {\bf $d$-FTS Nim}. A variant of Nim in which forbidden positions are specified in a similar way was studied by Garrabrant, Friedman, and Landsberg \cite{gar13}. However, their framework does not include the case where $S$ is an infinite set, as considered in the present study.

We write $\mathbb{X}_d = \{\bm{x}\in\mathbb{X}\mid |\bm{x}|\equiv 0 \pmod d\}
$ for the set of positions in which the total number of remaining stones is a multiple of $d$.

\subsection{Outcome Characterization for Two Piles}

As a first step in the study of $d$-FTS Nim, we consider the case of two piles, that is, $n=2$, and determine the set of $\CP$-positions.
First, when $d=2$, $P_0=\{(0,0),(2,2),(4,4),\ldots\}$, and $P_1=\{(0,1),(1,0),(2,3),(3,2),(4,5),(5,4),\ldots\}$.
This result is included in Theorem 3.1 in Section 3.

The following theorem completely characterizes the two-pile case. Let $D$ denote the set of nonnegative integers $x$ such that $2x$ is a multiple of $d$.

\begin{theorem}
Consider two-pile $d$-FTS Nim with $d\geq 3$. Define the following subsets of $\mathbb{X}$:
\begin{eqnarray*}
P_0&=&\{(x,x) \mid x\in D\}, \\
P_{1,0}&=&\{(x,x) \mid x\notin D\mbox{ and }x-1\notin D\},\\
P_{1,1}&=&\{(x,x+1),(x+1,x) \mid x\in D\}.
\end{eqnarray*}
Let $P_1=P_{1,0}\cup P_{1,1}, P=P_0\cup P_1$. Then $P$ is the set of $\CP$-positions of two-pile $d$-FTS Nim.
\end{theorem}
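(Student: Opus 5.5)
We use the standard characterization: $P$ is the set of $\CP$-positions if and only if (i) no legal move leads from a position of $P$ to a position of $P$, and (ii) every position outside $P$ has a legal move into $P$. A legal move is one landing outside $\mathbb{X}_d$. Note that $D$ consists of the multiples of $d$ when $d$ is odd and of the multiples of $d/2$ when $d$ is even. The position $(x,x)$ lies in $\mathbb{X}_d$ exactly when $x\in D$, which explains $P_0$. The position $(x,x+1)$ has total $2x+1$, which is never a multiple of $d$ for $d$ even; for $d$ odd we must check that $2x+1\not\equiv 0$ when $d\mid x$, which holds since $d\ge 3$. So $P_1\cap\mathbb{X}_d=\emptyset$, consistent with the notation.

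\emph{Step (i), closure.} From $(x,x)$, any move gives $(x,y)$ or $(y,x)$ with $y<x$, which is not diagonal. Hence it could only land in $P_{1,1}$, i.e.\ $(x,x-1)$ with $x-1\in D$. This is excluded for $x\in D$ because consecutive integers cannot both lie in $D$ when $d\ge 3$ (the spacing of $D$ is at least $d/2\ge 3/2$, so at least $2$). It is excluded for $(x,x)\in P_{1,0}$ by the definition of $P_{1,0}$. From $(x,x+1)$ with $x\in D$, the possible moves are to $(x,x)$, which is forbidden since $x\in D$ puts it in $\mathbb{X}_d$; to $(x,y)$ with $y<x$, which is non-diagonal and not adjacent in the right orientation; to $(y,x+1)$ with $y<x$, which is diagonal only if impossible; and to $(x-1,x)$-type targets, which would require $x-1\in D$, impossible.

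\emph{Step (ii), reaching $P$.} Take $(a,b)\notin P$ with, by symmetry, $a\le b$.
\begin{itemize}
\item If $a=b$, then $a\notin D$ and $a-1\in D$. Move to $(a-1,a)\in P_{1,1}$. This is legal because its total $2a-1$ is not a multiple of $d$, by the argument above.
\item If $b\ge a+2$, or $b=a+1$ with $a\notin D$, try first to move to $(a,a)$. This is legal when $a\notin D$ and lands in $P_{1,0}$ provided $a-1\notin D$. If $a-1\in D$, move instead to $(a-1,a)$ by reducing the second pile to $a-1$? That is not possible, since that move leaves $(a,a-1)$; this target is reachable only when $b>a-1$, which holds, and $(a,a-1)$ lies in $P_{1,1}$ and is legal.
\item If $a\in D$ and $b\ge a+2$, move $b\to a+1$, landing in $P_{1,1}$, which is legal.
\end{itemize}
The remaining case is $a\notin D$, $a-1\in D$, $b=a+1$, handled above via $(a,a-1)$.

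The main obstacle is the bookkeeping in step (ii). One must check carefully that every chosen target avoids $\mathbb{X}_d$, especially for odd versus even $d$, and that the case split covers $a=0$: here $0\in D$, so for $(0,b)$ with $b\ge2$ we move to $(0,1)$. I would organize step (ii) by whether $a\in D$, $a-1\in D$, or neither, and verify legality in each case using only the fact that no two consecutive integers lie in $D$.
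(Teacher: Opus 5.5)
Your proof is correct and follows essentially the same route as the paper. The closure check is the same: from diagonal positions and from $(x,x+1)$, using that $D$ contains no two consecutive integers when $d\ge 3$. Step (ii) chooses the same targets $(a-1,a)$, $(a,a)$, $(a,a+1)$, $(a,a-1)$, and your explicit check that $P_1\cap\mathbb{X}_d=\emptyset$ supplies the legality of these moves, which the paper leaves implicit.

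Two wording fixes are worth making:
\begin{itemize}
\item From $(x,x+1)$, the target $(x,x-1)$ \emph{is} adjacent in the $(z+1,z)$ orientation. It is excluded only because $x-1\notin D$, so say that directly.
\item The self-correction in the middle of the second bullet of step (ii) should be replaced by a plain statement of the move $b\to a-1$, which reaches $(a,a-1)$.
\end{itemize}
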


\medskip

For example, when $d=3$, $P_0=\{(0,0),(3,3),(6,6),\ldots\}, P_{1,0}=\{(2,2),$\\$(5,5),(8,8),\ldots\}$, and $P_{1,1}=\{(0,1),(1,0),(3,4),(4,3),(6,7),(7,6),\ldots\}$.

\medskip

{\bf Proof.}
Since $P_0$ is a subset of $\mathbb{X}_d$ and therefore contains no position that can occur during the course of a game, it suffices to prove the following two statements.

\begin{itemize}
\item[(i)] If $\bm{x}\in P$ and $\bm{x}\to\bm{x'}$, then $\bm{x'}\notin P_1$.
\item[(ii)] For every $\bm{x}\notin P$, there exists $\bm{x'}$ such that
$\bm{x}\to\bm{x'}$ and $\bm{x'}\in P_1$.
\end{itemize}

(i) Suppose that $(x,x)\in P_0$. After stones are removed from one of the two piles, the two pile sizes become different, so the resulting position cannot belong to $P_{1,0}$. Moreover, even if the move is to $(x,x-1)$, we have $x-1\notin D$ because $x\in D$; here the assumption $d\neq 2$ is required. Hence the resulting position cannot belong to $P_{1,1}$ either.

Similarly, if $(x,x)\in P_{1,0}$, then $x-1\notin D$, and therefore $(x-1,x)\notin P_{1,1}$. Finally, suppose that $(x,x+1)\in P_{1,1}$. Since $x\in D$, we have $(x,x)\notin P_{1,0}$, and since $x-1\notin D$, we also have $(x,x-1)\notin P_{1,1}$.

(ii) Let $(x,y)\notin P$. Without loss of generality, we may assume that either $x=y$ and $x-1\in D$, or $x<y$.

If $x=y$ and $x-1\in D$, we can move to $(x-1,x)\in P_{1,1}$. If $x<y$ and $x,x-1\notin D$, we can move from $(x,y)$ to $(x,x)\in P_{1,0}$. If $x<y$ and $x\in D$, then $x+1\neq y$ because $(x,y)\notin P_{1,1}$, and hence we can move to $(x,x+1)\in P_{1,1}$. Finally, if $x<y$ and $x-1\in D$, we can move to $(x,x-1)\in P_{1,1}$.
\quad$\Box$

\section{$n$-Pile $d$-FTS Nim for $d\leq 4$}

In this section, we consider the case where the number of piles is $n\geq 3$. In particular, for $d=2,3,4$, the set of $\CP$-positions can be determined as described below.

From this point on, let $\bm{x}=(x_1,x_2,\ldots,x_n), \bm{x'}=(x'_1,x'_2,\ldots,x'_n)$.
For an integer $m\geq 2$, define
\[
B_m=
\left\{
\bm{x}\in\mathbb{X}
\ \middle|\
\left\lfloor\frac{x_1}{m}\right\rfloor
\oplus
\left\lfloor\frac{x_2}{m}\right\rfloor
\oplus\cdots\oplus
\left\lfloor\frac{x_n}{m}\right\rfloor
=0
\right\}.
\]
Thus, $B_m$ is the set of positions for which the nim-sum of the quotients obtained by dividing the sizes of the piles by $m$ is zero.

\subsection{The Case $d=2$}

In $2$-FTS Nim, moves to positions with an even total number of stones are forbidden. The set of $\CP$-positions is characterized as follows.

\begin{theorem}
Consider $2$-FTS Nim, that is, $d$-FTS Nim with $d=2$. Define
$P=P_0\cup P_1$,
where
\begin{eqnarray*}
P_0
&=&
\{\bm{x}\in B_2\cap\mathbb{X}_2
\mid
x_i \mbox{ is even for every } i\},
\\
P_1
&=&
\{\bm{x}\in B_2\setminus\mathbb{X}_2\}
\\
&=&
\{\bm{x}\in B_2
\mid
|\bm{x}| \mbox{ is odd}\}.
\end{eqnarray*}
Then $P$ is the set of $\CP$-positions of $2$-FTS Nim.
\end{theorem}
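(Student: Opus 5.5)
The proof follows the same two-step structure as the proof of Theorem 2.3. Every legal move leads to a position with odd total, that is, outside $\mathbb{X}_2$. So the positions in $P_0$ never arise as results of moves, and it suffices to prove two statements:
\begin{itemize}
\item[(i)] if $\bm{x}\in P$ and $\bm{x}\to\bm{x'}$ is a legal move, then $\bm{x'}\notin P_1$;
\item[(ii)] for every $\bm{x}\notin P$ there is a legal move $\bm{x}\to\bm{x'}$ with $\bm{x'}\in P_1$.
\end{itemize}
Together these show by induction on $|\bm{x}|$ that $P$ is exactly the set of $\CP$-positions. The one tool needed is the quotient vector $(\lfloor x_1/2\rfloor,\ldots,\lfloor x_n/2\rfloor)$, combined with properties (N1) and (N2) applied to it.

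For (i), the key observation is that a move $x_i\to x'_i$ leaves $\lfloor x_i/2\rfloor$ unchanged only when $x_i$ is odd and $x'_i=x_i-1$. Whenever the quotient changes, it strictly decreases, so (N1) applied to the quotients shows $\bm{x'}\notin B_2$.
\begin{itemize}
\item If $\bm{x}\in P_0$, every pile is even. Any move $x_i\to x'_i$ then satisfies $x'_i\le x_i-1$ with $x_i$ even, so $\lfloor x'_i/2\rfloor<\lfloor x_i/2\rfloor$, and hence $\bm{x'}\notin B_2$.
\item If $\bm{x}\in P_1$, then $|\bm{x}|$ is odd. A legal move must keep the total odd, so it removes an even, hence at least $2$, number of stones. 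Again the quotient strictly drops, so $\bm{x'}\notin B_2$.
\end{itemize}

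For (ii), split according to whether $\bm{x}\in B_2$.
\begin{itemize}
\item Suppose $\bm{x}\in B_2$. Since $\bm{x}\notin P_1$, the total $|\bm{x}|$ is even, and since $\bm{x}\notin P_0$, some $x_i$ is odd. Moving $x_i\to x_i-1$ keeps every quotient, so the result stays in $B_2$, and it makes the total odd. The move is therefore legal and lands in $P_1$.
\item Suppose $\bm{x}\notin B_2$. By (N2) applied to the quotients, there are an index $i$ and a value $q'<\lfloor x_i/2\rfloor$ making the quotient nim-sum zero. Both $2q'$ and $2q'+1$ are at most $2\lfloor x_i/2\rfloor-1<x_i$, so either one is a legal new size for pile $i$, and both give quotient $q'$. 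Exactly one of them makes the total odd; choose that one. The resulting $\bm{x'}$ lies in $B_2$ with odd total, so the move is legal and $\bm{x'}\in P_1$.
\end{itemize}

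No step is a real obstacle. The only point needing care is the parity bookkeeping: checking that the move chosen in (ii) is legal, i.e.\ leaves an odd total, and that in (i) every legal move from $P_1$ necessarily changes a quotient.
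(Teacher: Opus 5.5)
Your proof is correct and follows essentially the same route as the paper: it applies (N1) and (N2) to the halved pile sizes, treats $x_i\to x_i-1$ with $x_i$ odd as the only quotient-preserving move, and chooses between $2q'$ and $2q'+1$ by parity when $\bm{x}\notin B_2$. The differences are cosmetic. For $\bm{x}\in P_1$ you argue that a legal move must remove an even number of stones and so change a quotient, where the paper argues the contrapositive (an unchanged quotient forces a one-stone move and hence an even total); you also check explicitly that $2q'+1<x_i$, which the paper leaves implicit.
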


\medskip

{\bf Proof.}
(i) We first show that if $\bm{x}\in P_0$ and $\bm{x}\to\bm{x'}$, then $\bm{x'}\notin P_1$.
Since each $x_i$ is even, whenever $x_i\to x'_i$ we have $\lfloor\frac{x_i}{2}\rfloor
\neq\lfloor\frac{x'_i}{2}\rfloor$.
Thus, if $\bm{x}\in B_2$, property (N1) of the nim-sum implies that
$\bm{x'}\notin B_2$, and hence $\bm{x'}\notin P_1$.

(ii) Next, we show that if $\bm{x}\in P_1$ and $\bm{x}\to\bm{x'}$, then $\bm{x'}\notin P_1$.
For
$\lfloor\frac{x_i}{2}\rfloor=\lfloor\frac{x'_i}{2}\rfloor$
to hold after a move $x_i\to x'_i$, it is necessary that $x'_i=x_i-1$.
In this case exactly one stone is removed, so $|\bm{x}|$ and $|\bm{x'}|$ have opposite parity. Since $|\bm{x}|$ is odd, $|\bm{x'}|$ is even. Therefore, $\bm{x'}\notin P_1$.

(iii) Finally, we show that if $\bm{x}\notin P$, then there exists a position $\bm{x'}$ such that
$\bm{x}\to\bm{x'}$ and $\bm{x'}\in P_1$.

Suppose first that $\bm{x}\in B_2$. Since $\bm{x}\notin P$, $|\bm{x}|$ must be even; otherwise, $\bm{x}$ would belong to $P_1$. Moreover, since $\bm{x}\notin P_0$, there exists some $x_j$ that is odd. If we make the move
$x_j\to x'_j=x_j-1$,
then $\lfloor\frac{x_j}{2}\rfloor=\lfloor\frac{x'_j}{2}\rfloor$,
and hence $\bm{x'}\in B_2$. Furthermore, $|\bm{x'}|$ is odd, so $\bm{x'}\in P_1$.

Now suppose that $\bm{x}\notin P$ and $\bm{x}\notin B_2$. By property (N2) of the nim-sum, we can change some
$\lfloor\frac{x_j}{2}\rfloor$ 
to $\lfloor\frac{x'_j}{2}\rfloor$
so that $\bm{x'}\in B_2$. There are two possible values of $x'_j$ satisfying this condition: if
$\lfloor\frac{x'_j}{2}\rfloor=z$,
then $x'_j=2z$ or $x'_j=2z+1$. We therefore choose the one for which $|\bm{x'}|$ is odd. Then $\bm{x'}\in P_1$.
\quad$\Box$

\subsection{The Case $d=3$}

We next consider $3$-FTS Nim. In this case, the set of $\CP$-positions is characterized as follows.

\medskip

\begin{theorem}
Consider $3$-FTS Nim, that is, $d$-FTS Nim with $d=3$. Define
$P=P_0\cup P_1$, where
\begin{eqnarray*}
P_0
&=&
\{\bm{x}\in B_3\cap\mathbb{X}_3
\mid
x_i\not\equiv 2 \pmod 3 \mbox{ for every } i\},
\\
P_1
&=&
\{\bm{x}\in B_3\setminus\mathbb{X}_3
\mid
|\bm{x}|\equiv 1 \pmod 3\}.
\end{eqnarray*}
Then $P$ is the set of $\CP$-positions of $3$-FTS Nim.
\end{theorem}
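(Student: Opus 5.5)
The plan is to follow the proof of Theorem 3.1 for $d=2$. Positions in $P_0$ lie in $\mathbb{X}_3$ and can never be reached by a legal move. So, by induction on $|\bm{x}|$, it suffices to prove three things: (i) from $\bm{x}\in P_0$ there is no move to $P_1$; (ii) from $\bm{x}\in P_1$ there is no move to $P_1$; (iii) from every $\bm{x}\notin P$ there is a move to some $\bm{x'}\in P_1$. Any move into $P_1$ is automatically legal, since $|\bm{x'}|\equiv 1\pmod 3$.

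For (i) and (ii), I would split a move $x_i\to x'_i$ according to whether $\lfloor x_i/3\rfloor$ changes.

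If the quotient changes and $\bm{x}\in B_3$, then property (N1) gives $\bm{x'}\notin B_3$, so $\bm{x'}\notin P_1$.

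If the quotient is unchanged, then the move removes one or two stones within a block of three. In case (ii), $|\bm{x}|\equiv 1$, so $|\bm{x'}|\equiv 0$ or $2\pmod 3$, and $\bm{x'}\notin P_1$. In case (i), every $x_i\not\equiv 2\pmod 3$, so the only move staying in the same block is from $x_i\equiv 1$ to $x_i-1$. This removes exactly one stone, giving $|\bm{x'}|\equiv -1\equiv 2\pmod 3$, so again $\bm{x'}\notin P_1$. This is exactly where the residue condition in $P_0$ is used.

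For (iii), I would split on whether $\bm{x}\in B_3$.

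If $\bm{x}\in B_3$ and $\bm{x}\notin P$, then $|\bm{x}|\not\equiv 1\pmod 3$.
\begin{itemize}
\item If $|\bm{x}|\equiv 2$, some pile has a nonzero residue mod $3$. Removing one stone from that pile keeps its quotient, so $\bm{x'}\in B_3$ and $|\bm{x'}|\equiv 1$.
\item If $|\bm{x}|\equiv 0$, then $\bm{x}\notin P_0$ forces some $x_j\equiv 2\pmod 3$. Removing two stones from that pile keeps the quotient and gives $|\bm{x'}|\equiv 1$.
\end{itemize}

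If $\bm{x}\notin B_3$, property (N2) provides a pile $j$ and a value $z<\lfloor x_j/3\rfloor$ that makes the nim-sum of quotients zero. All three candidates $x'_j\in\{3z,3z+1,3z+2\}$ are smaller than $x_j$, so each is a legal reduction. Exactly one of them makes $|\bm{x'}|\equiv 1\pmod 3$, and choosing it gives $\bm{x'}\in P_1$.

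The only delicate step is checking in (i) that the restriction $x_i\not\equiv 2$ blocks every quotient-preserving move into residue $1$, and that (iii) covers the residue-$0$ case using exactly this restriction. The rest is routine.
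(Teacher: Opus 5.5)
Your proposal is correct and follows the paper's proof essentially step for step. It uses the same three-part reduction, the same combination of (N1) with the block-of-three observation in (i) and (ii), and the same appeal to (N2) followed by choosing among $3z,3z+1,3z+2$ in (iii). Your part (iii) is also slightly more complete than the paper's written argument: the paper treats only $\bm{x}\in B_3\cap\mathbb{X}_3$ and $\bm{x}\notin B_3$, and never explicitly handles $\bm{x}\in B_3$ with $|\bm{x}|\equiv 2\pmod 3$, which your one-stone removal from a pile with nonzero residue covers.
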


\medskip

{\bf Proof.}
(i) We first show that if $\bm{x}\in P_0$ and $\bm{x}\to\bm{x'}$, then $\bm{x'}\notin P_1$.
Since $\bm{x}\in B_3$, property (N1) of the nim-sum implies that, in order for $\bm{x'}$ to belong to $B_3$, we must have
$\lfloor\frac{x_i}{3}\rfloor=\lfloor\frac{x'_i}{3}\rfloor$
for every $i$.
Therefore, if the move is $x_j\to x'_j$, then, since
$x_j\not\equiv 2\pmod 3$, it is necessary that
$
x_j\equiv 1\pmod 3
$ and $x'_j=x_j-1$. 
In this case exactly one stone is removed. Since $\bm{x}\in\mathbb{X}_3$, we have
$
|\bm{x'}|\equiv 2\pmod 3,
$
and hence $\bm{x'}\notin P_1$.

(ii) Next, we show that if $\bm{x}\in P_1$ and $\bm{x}\to\bm{x'}$, then $\bm{x'}\notin P_1$.
Since $\bm{x}\in B_3$, as in (i), for $\bm{x'}$ to belong to $B_3$ we must have
$
\lfloor\frac{x_i}{3}\rfloor=\lfloor\frac{x'_i}{3}\rfloor$
for every $i$.
If $x_j\to x'_j$ and
$
\lfloor\frac{x_j}{3}\rfloor=\lfloor\frac{x'_j}{3}\rfloor,
$
then $x'_j=x_j-1$ or $x'_j=x_j-2$. Hence
$|\bm{x}|\not\equiv |\bm{x'}|\pmod 3$.
Since $|\bm{x}|\equiv 1\pmod 3$, it follows that $\bm{x'}\notin P_1$.

(iii) Finally, we show that if $\bm{x}\notin P$, then there exists a position $\bm{x'}$ such that
$\bm{x}\to\bm{x'}$ and $\bm{x'}\in P_1$.

First suppose that $\bm{x}\in B_3\cap\mathbb{X}_3$. Since $\bm{x}\notin P$, there exists some $x_j$ such that
$x_j\equiv 2\pmod 3$.
If we set
$x'_j=x_j-2$,
then
$\lfloor\frac{x_j}{3}\rfloor=\lfloor\frac{x'_j}{3}\rfloor$,
so $\bm{x'}$ remains in $B_3$, while
$|\bm{x'}|\equiv 1\pmod 3$.
Thus $\bm{x'}\in P_1$.

Now suppose that $\bm{x}\notin B_3$. By property (N2) of the nim-sum, we can change some
$\lfloor\frac{x_j}{3}\rfloor$ to $\lfloor\frac{x'_j}{3}\rfloor$ so that $\bm{x'}\in B_3$. There are three possible values of $x'_j$ satisfying this condition: if $\lfloor\frac{x'_j}{3}\rfloor=z$, then $x'_j=3z, 3z+1$, or $3z+2$.
 We therefore choose the one for which $|\bm{x'}|\equiv 1\pmod 3$. Then $\bm{x'}\in P_1$.
\quad$\Box$

\subsection{The Case $d=4$}

We next consider $4$-FTS Nim.

\begin{theorem}
Consider $4$-FTS Nim, that is, $d$-FTS Nim with $d=4$. Define
$P=P_0\cup P_1$, where
\begin{eqnarray*}
P_0
&=&
\{\bm{x}\in B_2\cap\mathbb{X}_4
\mid
x_i \mbox{ is even for every } i\},
\\
P_1
&=&
\{\bm{x}\in B_2\setminus\mathbb{X}_4
\mid
|\bm{x}| \mbox{ is odd}\}.
\end{eqnarray*}
Then $P$ is the set of $\CP$-positions of $4$-FTS Nim.
\end{theorem}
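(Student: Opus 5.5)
The plan is to follow the same three-part scheme as in the proofs of Theorems 3.1 and 3.2. Positions in $\mathbb{X}_4$ never arise during play, and every element of $P_1$ has odd total and so is not in $\mathbb{X}_4$. It therefore suffices to show:
(i) no move from $P_0$ lands in $P_1$;
(ii) no move from $P_1$ lands in $P_1$;
(iii) from every $\bm{x}\notin P$ there is a move to some $\bm{x'}\in P_1$.
Any such $\bm{x'}$ has odd total, so the move in (iii) is automatically legal.

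For (i), let $\bm{x}\in P_0$. Every $x_i$ is even, so any move $x_i\to x'_i$ changes $\lfloor x_i/2\rfloor$. Since $\bm{x}\in B_2$, property (N1) gives $\bm{x'}\notin B_2$, hence $\bm{x'}\notin P_1$. This is identical to Theorem 3.1.

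For (ii), let $\bm{x}\in P_1$. By (N1), $\bm{x'}$ can stay in $B_2$ only if the moved pile keeps its quotient $\lfloor x_j/2\rfloor$. That forces $x'_j=x_j-1$ with $x_j$ odd. Then $|\bm{x'}|=|\bm{x}|-1$ is even, so $\bm{x'}\notin P_1$.

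For (iii), I would split into two cases.

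\emph{Case $\bm{x}\notin B_2$.} By (N2) some pile $j$ can have its quotient lowered to a value $z<\lfloor x_j/2\rfloor$ so that the quotients have nim-sum zero. Both $x'_j=2z$ and $x'_j=2z+1$ are then smaller than $x_j$, so both are legal reductions of pile $j$. I choose the one making $|\bm{x'}|$ odd, which gives $\bm{x'}\in P_1$.

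\emph{Case $\bm{x}\in B_2$.} If $|\bm{x}|$ is odd, then $\bm{x}\notin\mathbb{X}_4$ and so $\bm{x}\in P_1$, a contradiction. Hence $|\bm{x}|$ is even. If some $x_j$ is odd, the move $x_j\to x_j-1$ keeps the quotient, so $\bm{x'}\in B_2$, and it makes the total odd; thus $\bm{x'}\in P_1$.

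The remaining sub-case, which I expect to be the only genuinely new point compared with $d=2$, is when every $x_i$ is even. Here I must show that $\bm{x}$ actually lies in $P_0$, i.e.\ that $|\bm{x}|\equiv 0\pmod 4$. Write $x_i=2q_i$. Since $q_1\oplus\cdots\oplus q_n=0$, and the lowest bit of a nim-sum equals the parity of the ordinary sum, $\sum q_i$ is even. Therefore $|\bm{x}|=2\sum q_i\equiv 0\pmod 4$, so $\bm{x}\in P_0\subseteq P$, contradicting $\bm{x}\notin P$. So this sub-case cannot occur, which completes (iii) and the proof.
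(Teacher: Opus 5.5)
Your proof is correct and follows the paper's approach. The paper also reuses the three-step argument of Theorem 3.1, and its only new ingredient is exactly your observation that a position in $B_2$ with all piles even has $|\bm{x}|\equiv 0 \pmod 4$; this makes $P_0$ and $P_1$ coincide with the $d=2$ sets, and moves into $P_1$ are legal because those totals are odd. You have simply written out the steps the paper leaves as ``the same proof as in the case $d=2$.''
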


Note that if $\bm{x}\in B_2$, then the nim-sum of the $2^1$-digits of the pile sizes is zero. Moreover, if all $x_i$ are even, then $|\bm{x}|$ is necessarily a multiple of $4$. Therefore, the conditions defining the positions in $P_0$ and $P_1$ are exactly the same as those in the case $d=2$. Hence, the theorem follows by the same proof as in the case $d=2$.

\section{$d$-FTS Nim for $d\geq 5$}

In the previous section, we completely determined the set of $\CP$-positions for $d\leq 4$. In contrast, determining the set of $\CP$-positions appears to be more difficult for $d\geq 5$. In what follows, we first explain that even for three piles, a necessary and sufficient condition for a position to be a $\CP$-position has not yet been obtained when $d=5$. We then show that when $d=2^k$ with $k\geq 3$ and there are three piles, a structure similar to those for $d=2$ and $d=4$ arises, allowing the set of $\CP$-positions to be determined.

\subsection{The Case $d=5$}

For $5$-FTS Nim, by analogy with the case $d=3$, one might expect that the set of $\CP$-positions could be characterized by a condition such as requiring the nim-sum of the quotients obtained by dividing the pile sizes by $5$ to be zero. However, there are $\CP$-positions, such as $(5,4,2)$, that do not satisfy this quotient nim-sum condition. Thus, the set of $\CP$-positions cannot be characterized in the same form as for $d\leq 4$.

We therefore first restrict our attention to the three-pile case. As a step toward determining the set of $\CP$-positions, we show that for each pair $(x,y)$, there exists a unique $z$ such that $(x,y,z)$ is a $\CP$-position belonging to $\mathbb{X}\setminus\mathbb{X}_d$. Recall that $P_1$ denotes the set of $\CP$-positions not contained in $\mathbb{X}_d$.

\medskip

\begin{theorem}
Consider three-pile $d$-FTS Nim with $d\geq 2$. For every pair of nonnegative integers $x,y$, there exists a unique $z$ such that $(x,y,z)\in P_1$.
\end{theorem}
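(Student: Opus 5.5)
We prove uniqueness and existence separately, both with the standard argument used for ordinary Nim. Uniqueness reduces to a one-move observation. Existence is a strong induction on $x+y$ that combines a counting argument with an analysis of which positions are $\CN$-positions.

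\textbf{Uniqueness.} Suppose $(x,y,z)$ and $(x,y,z')$ both lie in $P_1$ with $z>z'$. Then $(x,y,z)\to(x,y,z')$ is a single move. It is legal, because $(x,y,z')\notin\mathbb{X}_d$ by the definition of $P_1$. So a $\CP$-position has a legal move to another $\CP$-position, which is impossible. Hence there is at most one such $z$.

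\textbf{Existence: setup.} We argue by strong induction on $x+y$, with $x\le y$ by symmetry. Fix $(x,y)$ and suppose, for contradiction, that $(x,y,z)\notin P_1$ for every $z$. Restrict attention to the values $z$ with $x+y+z\not\equiv 0\pmod d$. Each such $(x,y,z)$ is then an $\CN$-position, so it has a legal move to a $\CP$-position, which necessarily lies in $P_1$ since it is reached in play. That move is of one of three kinds:
\begin{itemize}
\item[(a)] $z\to z'$, giving $(x,y,z')\in P_1$ with $z'<z$. This contradicts the standing assumption, so it never happens.
\item[(b)] $x\to x'$, giving $(x',y,z)\in P_1$.
\item[(c)] $y\to y'$, giving $(x,y',z)\in P_1$.
\end{itemize}

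\textbf{Existence: counting.} By the induction hypothesis, each pair $(x',y)$ with $x'<x$ has exactly one $z$ with $(x',y,z)\in P_1$. Likewise each pair $(x,y')$ with $y'<y$ has exactly one such $z$. Uniqueness is available here because it was proved for every pair. So at most $x+y$ values of $z$ can be served by moves of type (b) or (c). On the other hand, infinitely many $z$ satisfy $x+y+z\not\equiv 0 \pmod d$. Choose such a $z$ outside this finite exceptional set. Then $(x,y,z)$ has no move of type (a), (b) or (c) to a $\CP$-position. Since it is also not in $\mathbb{X}_d$, it is a $\CP$-position, so it belongs to $P_1$. This contradicts the assumption and completes the induction.

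One point needs care in the counting step. A move to $(x',y,z)$ only matters if $(x',y,z)\in P_1$, and $P_1$ excludes totals divisible by $d$. So the ``served'' values of $z$ are exactly the unique $z$'s supplied by the induction hypothesis, and the finiteness bound holds. The base case $x=y=0$ is included: there are no type (b) or (c) moves at all, so any $z\not\equiv 0 \pmod d$, for instance $z=1$, gives a $\CP$-position, since $(0,0,1)$ has only the forbidden move to $(0,0,0)$.

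I expect the main obstacle to be conceptual rather than computational. One must observe that $P_1$-membership of $(x,y,z)$ only requires $(x,y,z)$ to be a $\CP$-position outside $\mathbb{X}_d$, and that $\CP$-status depends only on moves to non-forbidden positions. This is what makes the counting of excluded $z$-values valid despite the forbidden totals.
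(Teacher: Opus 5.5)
Your proof is correct and follows essentially the same route as the paper. Uniqueness comes from the single move $(x,y,z)\to(x,y,z')$, and existence comes from counting: the finitely many pairs $(x',y)$ with $x'<x$ and $(x,y')$ with $y'<y$ can each account for at most one value of $z$, while infinitely many $z$ avoid $\mathbb{X}_d$. The strong induction on $x+y$ is harmless but unnecessary, since, as you note yourself, the counting step only needs the ``at most one'' part, which uniqueness already supplies for every pair.
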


\medskip

{\bf Proof.}
We prove the uniqueness and existence of such a $z$ separately.

First, suppose that for some $x,y$,
$(x,y,z)\in P_1$ and $(x,y,z')\in P_1$.
If $z>z'$, then there is a move from $(x,y,z)$ to $(x,y,z')$, which is impossible since both positions belong to $P_1$. Hence $z=z'$, proving uniqueness.

Next, suppose that for some pair $x,y$, there is no $z$ such that $(x,y,z)\in P_1$. Then, for every $z$ such that
$(x,y,z)\notin\mathbb{X}_d$,
there must exist either some $x'<x$ such that
$(x',y,z)\in P_1$, or some $y'<y$ such that $(x,y',z)\in P_1$.
The pairs $(x',y)$ and $(x,y')$ arising in this way must be distinct for different values of $z$. Indeed, for example, if
$(x',y,z)\in P_1, (x',y,z')\in P_1$, and $z\neq z'$, then this would contradict the uniqueness proved above.
 However, there are only finitely many possible pairs of the form $(x',y)$ with $x'<x$ or $(x,y')$ with $y'<y$, whereas there are infinitely many possible values of $z$. This is a contradiction. Therefore, for every pair $x,y$, there exists a $z$ such that $(x,y,z)\in P_1$.
\quad$\Box$

\medskip

By this theorem, for each pair $x,y$, we can define $f(x,y)$ to be the unique value of $z$ such that $(x,y,z)\in P_1$.
Note that if positions in $\mathbb{X}_d$ are also allowed, there may be more than one value of $z$ for which $(x,y,z)\in P$. For example, in $5$-FTS Nim, both $(5,0,5)$ and $(5,0,6)$ are $\CP$-positions. Therefore, $f(x,y)$ is defined so that
$(x,y,f(x,y))\notin\mathbb{X}_d$.

For $d=2$, it follows from Theorem 3.1 that if
$x=2\alpha+\varepsilon_x, y=2\beta+\varepsilon_y$,
where $\alpha$ and $\beta$ are the quotients obtained by dividing $x$ and $y$, respectively, by $2$, then
\begin{equation}
f(x,y)=2(\alpha\oplus\beta)+\varepsilon,
\end{equation}
where $\varepsilon\in\{0,1\}$ and
$\varepsilon_x+\varepsilon_y+\varepsilon$ is odd.

Similarly, for $d=3$, it follows from Theorem 3.2 that if $x=3\alpha+\varepsilon_x, y=3\beta+\varepsilon_y$,
where $\alpha$ and $\beta$ are the quotients obtained by dividing $x$ and $y$, respectively, by $3$, then
\begin{equation}
f(x,y)=3(\alpha\oplus\beta)+\varepsilon,
\end{equation}
where $\varepsilon\in\{0,1,2\}$ and $\varepsilon_x+\varepsilon_y+\varepsilon\equiv 1\pmod 3$.

For a general integer $d\geq 2$, the values of $f(x,y)$ can be determined recursively, starting with positions having smaller total numbers of stones. For a proper subset $T$ of the nonnegative integers, let ${\rm mex}(T)$ denote the smallest nonnegative integer not contained in $T$ (the Minimum EXcluded value).

\begin{proposition}
We have $f(0,0)=1$, and for $x+y\geq 1$, $f(x,y)$ is recursively given by
\begin{equation}
f(x,y)={\rm mex}\left\{
\begin{array}{ll}
f(x',y) & (0\leq x' < x), \\[2mm]
f(x,y') & (0\leq y' < y), \\[2mm]
kd-(x+y) & \left(k\geq \left\lceil\dfrac{x+y}{d}\right\rceil\right)
\end{array}
\right\}.
\end{equation}
\end{proposition}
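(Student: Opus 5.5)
Fix $x,y$ with $x+y\ge 1$ and let $T$ denote the set on the right-hand side. The plan is to show that $z=f(x,y)$ is not in $T$, and that every integer $w<f(x,y)$ lies in $T$. Then $f(x,y)={\rm mex}(T)$. Note that $T$ is a proper subset of the nonnegative integers, since $f(x,y)\notin T$. For $f(0,0)$: the position $(0,0,z)$ is in $P_1$ exactly when $z\not\equiv 0\pmod d$ and no move from it reaches $P_1$. The position $(0,0,1)$ has only the move to $(0,0,0)\in\mathbb{X}_d$, which is forbidden. So $(0,0,1)$ is a $\CP$-position outside $\mathbb{X}_d$ (as $d\ge 2$), and by uniqueness in Theorem 4.1, $f(0,0)=1$.

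\emph{Step 1: $f(x,y)\notin T$.} Put $z=f(x,y)$. Since $(x,y,z)\notin\mathbb{X}_d$, the sum $x+y+z$ is not a multiple of $d$. Hence $z\neq kd-(x+y)$ for every $k$, and in particular for every $k\ge\lceil (x+y)/d\rceil$. Next suppose $f(x',y)=z$ for some $x'<x$. Then $(x,y,z)\to(x',y,z)$ is a move, and it is legal because $(x',y,z)\in P_1$ is outside $\mathbb{X}_d$. This move goes from a $\CP$-position to a $\CP$-position, which is impossible. The same argument rules out $f(x,y')=z$ with $y'<y$.

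\emph{Step 2: every $w<z$ lies in $T$.} Take $w<z$ and consider the position $(x,y,w)$.
\begin{itemize}
\item If $x+y+w\equiv 0\pmod d$, then $w=kd-(x+y)$ with $kd\ge x+y$, i.e.\ $k\ge\lceil (x+y)/d\rceil$. So $w\in T$ through the third line of the definition.
\item Otherwise $(x,y,w)\notin\mathbb{X}_d$, and $(x,y,z)\to(x,y,w)$ is a legal move. Since $(x,y,z)$ is a $\CP$-position, $(x,y,w)$ is an $\CN$-position. So $(x,y,w)$ has a legal move to some $\CP$-position $\bm{x}''$, and $\bm{x}''$ lies in $P_1$ because it is not in $\mathbb{X}_d$. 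The move reduces exactly one pile:
\begin{itemize}
\item If it reduces the third pile, then $\bm{x}''=(x,y,w')$ with $w'<w<z$, which contradicts the uniqueness in Theorem 4.1.
\item If it reduces the first pile, then $\bm{x}''=(x',y,w)\in P_1$, so $f(x',y)=w$.
\item If it reduces the second pile, then $f(x,y')=w$.
\end{itemize}
In either of the last two cases, $w\in T$.
\end{itemize}

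Together the two steps give $f(x,y)={\rm mex}(T)$. The recursion is well-founded because it only uses values $f(x',y)$ and $f(x,y')$ with smaller $x+y$, and the existence of $f$ is supplied by Theorem 4.1.

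The main obstacle is the case analysis in Step 2. There it must be checked that a winning move from $(x,y,w)$ lands in $P_1$ rather than $P_0$; this holds because positions of $\mathbb{X}_d$ are unreachable. A reduction of the third pile must be excluded using uniqueness. The remaining argument is the standard mex bookkeeping.
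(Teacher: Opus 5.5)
Your proof is correct, but it runs in the opposite direction from the paper's.

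The paper sets $q={\rm mex}(T)$ and checks directly that $(x,y,q)\in P_1$:
every $z<q$ lies in $T$, so $(x,y,z)$ is either in $\mathbb{X}_d$ or has a legal move to $P_1$, and hence $(x,y,z)\notin P_1$;
$q\notin T$ gives $f(x',y)\neq q$ and $f(x,y')\neq q$, so $(x',y,q),(x,y',q)\notin P_1$;
$q\notin T$ also gives $x+y+q\not\equiv 0\pmod d$.
Uniqueness then yields $f(x,y)=q$.

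You instead start from $z=f(x,y)$ and verify the two defining properties of the mex. That costs you two things.
\begin{itemize}
\item You need the existence half of Theorem 4.1 (the pigeonhole argument) to have $z$ in hand.
\item Your Step 2 must use the winning-move property of $\CN$-positions and then rule out third-pile moves by uniqueness.
\end{itemize}
The paper's argument needs only uniqueness and the values of $f$ at pairs with smaller $x+y$. Read as an induction on $x+y$, it therefore builds the $P_1$-position explicitly and re-derives existence without pigeonhole. Its check for $z<q$ is also a one-line observation.

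Your separate verification that $f(0,0)=1$ is fine. It also agrees with the recursion itself, since for $(x,y)=(0,0)$ one gets $T=\{0,d,2d,\ldots\}$, whose mex is $1$.
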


\medskip

{\bf Proof.}
For a fixed pair $x,y$, let $q$ be the mex of the set on the right-hand side of (4.3). If $z<q$, then by the definition of mex, at least one of the following holds: the total number of stones $x+y+z$ is a multiple of $d$; there exists some $y'<y$ such that
$(x,y',z)\in P_1$;
or there exists some $x'<x$ such that
$(x',y,z)\in P_1$.
Hence $(x,y,z)$ cannot belong to $P_1$.

It follows that no move from $(x, y, q)$ to $(x, y, z)$ with $z<q$ leads to a position in $P_1$. Moreover, by the definition of mex, none of the positions
$(x, y', q)\quad (y'<y)$ or $(x', y, q)\quad (x'<x)$
belongs to $P_1$. Thus, there is no move from $(x, y, q)$ to a position in $P_1$.

Furthermore, $x+y+q$ is not a multiple of $d$, since otherwise $q$ could be written in the form
$q=kd-(x+y)$.
Therefore, $(x, y, q)\in P_1$,
and hence $f(x, y)=q$.
\quad$\Box$

\medskip
 \begin{center}
\begin{tabular}{|c||c|c|c|c|c|c|c|c|c|c|}\hline
$x\backslash y$&\ 0 \ & \ 1 \ & \ 2 \ & \ 3 \ & \ 4 \ & \ 5 \ & \ 6 \ & \ 7 \ & \ 8 \ & \ 9 \ \\ \hline\hline
0&1&0&2&3&4&6&5&7&8&9\\ \hline
1&0&1&3&2&6&5&4&8&7&11\\ \hline
2&2&3&0&1&5&4&6&9&11&7\\ \hline
3&3&2&1&0&7&8&9&4&5&6\\ \hline
4&4&6&5&7&0&2&1&3&9&8\\ \hline
5&6&5&4&8&2&1&0&10&3&12\\ \hline
6&5&4&6&9&1&0&2&11&10&3\\ \hline
7&7&8&9&4&3&10&11&0&1&2\\ \hline
8&8&7&11&5&9&3&10&1&0&4\\ \hline
9&9&11&7&6&8&12&3&2&4&0\\ \hline
\end{tabular}
\end{center}

\medskip

For $d=5$, the values of $f(x,y)$ are given in the table above.
Furthermore, the value of $f(x,y)$ can be bounded from above and below by the following inequalities.

\begin{proposition}
For every pair $(x,y)$ of nonnegative integers, the following hold.
\begin{itemize}
\item[(i)] $f(x,y)\leq x+y+1$.
\item[(ii)] $f(x,y)\geq |x-y|-1$.
\item[(iii)] If $f(x,y)=x+y+1$, then
$2(x+y)\equiv 0\pmod d$.
\end{itemize}
\end{proposition}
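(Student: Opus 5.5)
We would prove (i) and (iii) together by strong induction on $x+y$, using the recursion of Proposition 4.2, and derive (ii) afterwards from (i) by a short game-theoretic argument. Throughout we use that $P_1$ is symmetric under permuting the three piles, so that $f$ is symmetric and $(x,y,z)\in P_1$ iff $z=f(x,y)$ iff $x=f(y,z)$, etc. This follows from the uniqueness in Theorem 4.1.

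\textbf{Parts (i) and (iii).} Let $T$ be the set on the right-hand side of (4.3), so $f(x,y)={\rm mex}(T)$. To get (i) it suffices to show that $x+y$ and $x+y+1$ are not both in $T$.
\begin{itemize}
\item A forbidden value $kd-(x+y)$ equals $x+y+1$ iff $d\mid 2(x+y)+1$, and equals $x+y$ iff $d\mid 2(x+y)$.
\item By the induction hypothesis (i), every $f(x',y)$ with $x'<x$ and every $f(x,y')$ with $y'<y$ is at most $x+y$. So none of them equals $x+y+1$.
\item Equality $f(x',y)=x'+y+1=x+y$ forces $x'=x-1$, and by (iii) for the smaller pair it forces $d\mid 2(x+y-1)$. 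The same holds for $f(x,y-1)$.
\end{itemize}
Hence $x+y+1\in T$ iff $d\mid 2(x+y)+1$, and $x+y\in T$ only if $d\mid 2(x+y)$ or $d\mid 2(x+y)-2$. If both were in $T$, then $d$ would divide $1$ or $3$. So $d=3$, and that case is read off directly from the explicit formula (4.2) coming from Theorem 3.2, which gives $f(x,y)\le x+y+1$ at once. The base case is $f(0,0)=1$.

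For (iii), $f(x,y)=x+y+1$ means every value $0,\dots,x+y$ lies in $T$, in particular $x+y\in T$. By the analysis above, either $d\mid 2(x+y)$, which is what we want, or $d\mid 2(x+y)-2$ via $f(x-1,y)=x+y$ or $f(x,y-1)=x+y$. The second alternative is the step we expect to be the main obstacle. We would try to exclude it using the full information that all of $0,\dots,x+y$ lie in $T$ while $T$ contains at most $x+y$ values of $f$. This forces enough forbidden values $kd-(x+y)$ into $[0,x+y]$ to pin down $x+y\bmod d$, playing off the congruences $d\mid 2(x+y)-2$ and $d\nmid 2(x+y)+1$. Failing a clean counting argument, we would strengthen the induction hypothesis to describe exactly when $f(x,y)=x+y$ occurs.

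\textbf{Part (ii).} Suppose $z=f(x,y)<|x-y|-1$, say $x>y$, so $x\ge y+z+2$. By (i), $x':=f(y,z)\le y+z+1<x$, and $(x',y,z)\in P_1$ by symmetry. So the move $x\to x'$ from $(x,y,z)\in P_1$ reaches another position of $P_1$. This is impossible, since $P_1$ consists of $\CP$-positions and no $\CP$-position has a legal move to another $\CP$-position.
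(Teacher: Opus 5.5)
Your arguments for (i) and (ii) match the paper's. The proof as a whole, however, has a genuine gap. You never establish (iii) in the case where $x+y\in T$ only because $f(x-1,y)=x+y$ or $f(x,y-1)=x+y$, that is, when $d\mid 2(x+y)-2$ but $d\nmid 2(x+y)$. Your inductive step for (i) uses (iii) at smaller pairs, and (ii) rests on (i), so nothing is actually proved until this case is closed.

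The counting idea you suggest cannot close it. $T$ contains at most $x+y$ values of $f$, so counting only forces some forbidden value $kd-(x+y)$ into $[0,x+y]$. When $d\mid 2(x+y)-2$ and $x+y\ge 2$, the value $x+y-2$ is already such a forbidden value, so no contradiction arises.

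The missing idea is to look one step further down, at $x+y-1$, which must also lie in $T$ when $f(x,y)=x+y+1$.
\begin{itemize}
\item If $d\mid 2(x+y-1)$, then $(x-1,y,x+y-1)$ and $(x,y-1,x+y-1)$ have total $2(x+y-1)$ and so lie in $\mathbb{X}_d$. Hence neither $f(x-1,y)$ nor $f(x,y-1)$ can equal $x+y-1$.
\item $x+y-1$ is not a forbidden value: $d\mid 2(x+y)-1$ together with $d\mid 2(x+y)-2$ would give $d\mid 1$.
\item Therefore $x+y-1=f(x',y)$ with $x'\le x-2$, or $x+y-1=f(x,y')$ with $y'\le y-2$. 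If no such pair exists, $x+y-1\notin T$ and you already have a contradiction.
\item By (i) for smaller pairs, this forces $x'=x-2$ (resp.\ $y'=y-2$) with $f$ equal to $x'+y+1$ (resp.\ $x+y'+1$). Then (iii) for smaller pairs gives $d\mid 2(x+y-2)$.
\item Combined with $d\mid 2(x+y-2)+2$, this yields $d\mid 2$, which is impossible for $d\ge 3$. For $d=2$, statement (iii) is trivial.
\end{itemize}
This is exactly how the paper finishes (iii). Once this step is added, your induction for (i) and your argument for (ii) go through as written.
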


\medskip

{\bf Proof.}
For $d=2,3$, the value of $f(x,y)$ is explicitly given by (4.1) and (4.2), respectively, and (i), (ii), and (iii) can be verified using, for example, $\alpha\oplus\beta\leq \alpha+\beta$.

Thus, in what follows, we may assume that $d\geq 4$.
We first prove (i) and (iii) by induction on $x+y$. If $x+y=0$, that is, if $(x,y)=(0,0)$, then $f(0,0)=1$, and both (i) and (iii) hold.
Suppose that (i) and (iii) hold whenever $x+y\leq k$. We show that they also hold when $x+y=k+1$.

First, suppose, to the contrary, that
$f(x,y)\geq x+y+2$.
Since
$f(x',y')<x+y+1$ whenever $x'+y'<x+y$, the value $x+y+1$ can be excluded from $f(x,y)$ only if
$x+y+(x+y+1)$ is a multiple of $d$. Therefore, 
$x+y+(x+y)$ is not a multiple of $d$. Hence, in order for $f(x,y)\neq x+y$, we must have
$f(x-1,y)=x+y$ or $f(x,y-1)=x+y$. By the induction hypothesis (iii), applied to a pair whose sum is smaller than $x+y$, it follows that $2(x+y-1)$ is a multiple of $d$. Thus both
$2x+2y+1$ and $2x+2y-2$ are multiples of $d$. This is possible only when $d=3$, contradicting the assumption $d\geq 4$. Therefore, $f(x,y)\leq x+y+1$ also holds when $x+y=k+1$.

Next, suppose that $x+y=k+1$ and $f(x,y)=x+y+1$. 
Assume that $2(x+y)$ is not a multiple of $d$. Then, in order for $f(x,y)\neq x+y$, we must again have
$f(x-1,y)=x+y$ or $f(x,y-1)=x+y$. By the induction hypothesis (iii), it follows that $2(x+y-1)$ is a multiple of $d$.
Under this condition, $(x-1,y,x+y-1), \ (x,y-1,x+y-1)\in\mathbb{X}_d$, since
$(x-1)+y+(x+y-1)=2(x+y-1)$. Therefore, neither $f(x-1,y)$ nor $f(x,y-1)$ can be equal to $x+y-1$. Hence, in order for
$f(x,y)\neq x+y-1$, we must have either $f(x-2,y)=x+y-1$ or $f(x,y-2)=x+y-1$.
For this to occur, $2(x+y-2)$ must also be a multiple of $d$. However, both $2(x+y-1)$ and $2(x+y-2)$ can be multiples of $d$ only when $d=2$, contradicting $d\geq 4$. Therefore, when $x+y=k+1$ and $f(x,y)=x+y+1$, we must have $2(x+y)\equiv 0\pmod d$. This proves (i) and (iii) for all pairs $x,y$.

Finally, since $(x,y,f(x,y))$ is a $\CP$-position if and only if $(x,f(x,y),y)$ and $(y,f(x,y),x)$ are $\CP$-positions, applying (i) to these permutations gives $y\leq f(x,y)+x+1$ and $x\leq f(x,y)+y+1$. These inequalities imply
$f(x,y)\geq |x-y|-1$, which proves (ii).
\quad$\Box$

\medskip

Building on these results, obtaining a complete solution for the three-pile case is one of our immediate goals.

\subsection{The Case $d=2^k$}

For the special case $d=2^k$, the set of $\CP$-positions in three-pile $2^k$-FTS Nim with $k\geq 3$ can be determined as follows.

\begin{theorem}
Consider three-pile $2^k$-FTS Nim with $k\geq 3$, that is, three-pile $d$-FTS Nim with $d=2^k$. Define
$P=P_0\cup P_1$, where
\begin{eqnarray*}
P_0
&=&
\{\bm{x}\in B_{2^{k-1}}\cap\mathbb{X}_{2^k}
\mid
x_i \mbox{ is a multiple of } 2^{k-1} \mbox{ for every } i\},
\\
P_1
&=&
\{\bm{x}\in B_{2^{k-1}}\setminus\mathbb{X}_{2^k}
\mid
|\bm{x}| \mbox{ is odd}\}.
\end{eqnarray*}
Then $P$ is the set of $\CP$-positions of $2^k$-FTS Nim.
\end{theorem}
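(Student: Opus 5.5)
The plan is to follow the template of the proofs of Theorems 3.1 and 3.3, with $m=2^{k-1}$ playing the role of $2$ and $d=2^k$ playing the role of $4$. Since $P_0\subseteq\mathbb{X}_{2^k}$ can never arise during play, it suffices to verify three things. First, no move from $P_0$ reaches $P_1$. Second, no move from $P_1$ reaches $P_1$. Third, every $\bm{x}\notin P$ has a move into $P_1$. Throughout I will write $q_i=\lfloor x_i/m\rfloor$ and $r_i=x_i-mq_i\in\{0,\ldots,m-1\}$.

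For the first step, let $\bm{x}\in P_0$. Every $x_i$ is a multiple of $m$, so any move $x_j\to x'_j$ strictly lowers $q_j$. By (N1), $\bm{x'}\notin B_m$, and hence $\bm{x'}\notin P_1$. This is exactly the argument used for $d=2$.

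For the third step, take $\bm{x}\notin P$ and split into two cases.

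\emph{Case $\bm{x}\notin B_m$.} Use (N2) to choose a pile $j$ and a target quotient $z<q_j$ restoring $B_m$. The move $x_j\to mz+r$ is legal for every $r\in\{0,\ldots,m-1\}$, because $mz+r<mq_j\le x_j$. Among these $m\ge 4$ residues, choose $r$ so that the total $|\bm{x'}|$ is odd. An odd total is automatically not a multiple of $2^k$, so $\bm{x'}\in P_1$.

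\emph{Case $\bm{x}\in B_m$.} Then either $|\bm{x}|$ is even, or $|\bm{x}|$ is odd and $\bm{x}\in\mathbb{X}_{2^k}$; the latter is impossible, since an odd total is not a multiple of $2^k$. So $|\bm{x}|$ is even, and because $\bm{x}\notin P_0$, either some $r_j\neq 0$ or the total fails to be a multiple of $2^k$. Whenever some $r_j$ is odd, removing one stone from pile $j$ keeps $q_j$ and makes the total odd, giving a move into $P_1$. The remaining configurations, in which all $r_j$ are even but not all zero, need a separate argument. There I would try lowering a quotient via (N2) applied to the reduced game, exploiting the fact that there are exactly three piles.

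The second step is where I expect the real difficulty. Let $\bm{x}\in P_1$. A move into $B_m$ must preserve all $q_i$ by (N1). It must therefore remove some $s$ stones with $1\le s\le r_j\le m-1$ from a single pile, and to land in $P_1$ the number $s$ must be even. Parity alone thus does not exclude such moves once $m\ge 4$, unlike the cases $d=2,4$. I would first test the definition of $P_1$ against small positions, for example $k=3$ with three piles and small residues. The aim is to see whether a hidden restriction on the residues $r_i$ is forced by the three-pile structure, or whether positions with $r_j\ge 2$ and odd total genuinely admit an even removal. Only after this check would I try to close the step, using the uniqueness of $f(x,y)$ from Theorem 4.1 as a consistency test on the proposed $P_1$. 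If $P_1$ is correct, then for each pair $(x_1,x_2)$ it must contain exactly one $x_3$. Counting the $x_3$ with $\lfloor x_3/m\rfloor=q_1\oplus q_2$ and odd total immediately gives $m/2$ candidates. This count is the crux I would verify before writing out the remaining details.
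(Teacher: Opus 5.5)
Your suspicion about the second step is right, and the gap cannot be closed: that step is false, so the statement as displayed does not hold for $k\ge 3$. Take $m=2^{k-1}\ge 4$ and $d=2^k\ge 8$.
\begin{itemize}
\item The position $(1,0,0)$ has no legal move, since its only move reaches total $0$, so it is a $\CP$-position.
\item Both $(1,0,0)$ and $(3,0,0)$ satisfy the displayed definition of $P_1$: all quotients are $0$ and the totals $1$ and $3$ are odd.
\item The move $(3,0,0)\to(1,0,0)$ is legal, so $(3,0,0)$ is an $\CN$-position.
\end{itemize}
Your own count of $m/2\ge 2$ admissible values of $x_3$ per pair, set against the uniqueness in Theorem 4.1, already gives this contradiction. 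You should have drawn the conclusion instead of deferring it.

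The misclassification also goes the other way. The positions $(2,2,0)$ and $(3,2,1)$ have even totals, so they lie outside the displayed $P$. Yet they are $\CP$-positions: below total $d$ the game is just mis\`ere Nim, and both are mis\`ere $\CP$-positions. Separately, your leftover case in the third step needs no special argument. Removing one stone from any pile with $r_j\neq 0$ preserves $q_j$ whatever the parity of $r_j$. Your suggested alternative, lowering a quotient from a position of $B_m$, would in fact leave $B_m$.

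The restriction on the residues that you were looking for is the one the paper itself uses in its explanation after the theorem (the proof is omitted there). $P_1$ should consist of those $\bm{x}\in B_m$ whose remainders $(r_1,r_2,r_3)$ form a $\CP$-position of mis\`ere Nim. For $m=2$ this is equivalent to an odd total, which is why the template of Theorems 3.1 and 3.3 works there; for $m\ge 4$ it is strictly stronger. With this $P_1$ your three-step plan goes through:
\begin{itemize}
\item The first step is unchanged.
\item The second follows from (N1) on the quotients, together with the fact that no mis\`ere Nim move joins two $\CP$-positions.
\item The third uses (N2) and then picks the remainder of the moved pile to complete a mis\`ere $\CP$-position with the other two remainders $a,b$. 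Take $a\oplus b$ if $\max(a,b)\ge 2$, and otherwise whichever of $0,1$ makes the number of ones odd. For $\bm{x}\in B_m$ with remainders not all zero, instead play the mis\`ere winning move on the remainders.
\end{itemize}

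The only new ingredient is legality. Since $q_1\oplus q_2\oplus q_3=0$ forces $q_1+q_2+q_3$ to be even, we get $|\bm{x'}|\equiv r_1+r_2+r_3\pmod{2^k}$. A three-pile mis\`ere $\CP$-position with piles below $2^{k-1}$ satisfies $1\le r_1+r_2+r_3\le 2^k-2$. Indeed, with nim-sum zero each binary digit $2^i$, $i<k-1$, occurs in none or exactly two of the $r_j$; otherwise the sum is $1$ or $3$. Hence $|\bm{x'}|$ is never a multiple of $2^k$. This is exactly where both the restriction to three piles and $d=2^k$ are used.
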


\begin{remark}
The theorem remains valid for $k=2$, and this case is covered by Theorem 3.3, whereas the case $k=1$ is different.
\end{remark}

\medskip

The proof of this theorem is almost the same as that of Theorem 3.1, and is therefore omitted. Instead, we explain how the condition characterizing these $\CP$-positions can be interpreted.

For each of the three pile sizes, consider the quotient and remainder upon division by $2^{k-1}$. Then a position characterized as belonging to $P_1$ is a $\CP$-position if and only if
\begin{itemize}
\item the nim-sum of the three quotients is zero, and
\item the three remainders form a $\CP$-position of mis\`ere Nim.
\end{itemize}

Thus, from a position that is not a $\CP$-position, one can move to a position in $P_1$ by the following procedure:
\begin{itemize}
\item if the nim-sum of the three quotients is nonzero, remove stones from one pile according to property (N2) so that the nim-sum of the quotients becomes zero;
\item while keeping the resulting quotient unchanged, adjust the remainder so that the three remainders form a $\CP$-position of mis\`ere Nim.
\end{itemize}

The reason that the set of $\CP$-positions can be determined in this way when $d=2^k$ and there are three piles is the following. In three-pile mis\`ere Nim in which each pile contains fewer than $2^{k-1}$ stones, the total number of stones in a $\CP$-position is positive and less than $2^k$. Consequently, after the above move, the total number of stones cannot be a multiple of $d$.

When such a separation into quotients and remainders is not possible, adjusting the remainders as a position of mis\`ere Nim also affects the nim-sum of the quotients. This appears to prevent the set of $\CP$-positions from having a simple characterization.

\section{General Forbidden-Total-Size Nim}

The $d$-FTS Nim considered in this paper is defined by forbidding moves to positions $\bm{x}$ for which the total number of stones $|\bm{x}|$ is a multiple of $d$. We now return to the original setting. Let $S$ be a subset of the nonnegative integers, and forbid moves to positions $\bm{x}$ satisfying $|\bm{x}|\in S$.
Note also that in normal-play Nim, whenever $\bm{x}$ is a $\CP$-position, $|\bm{x}|$ is always even. Hence, if $S$ contains only odd integers, the set of $\CP$-positions coincides with that of normal-play Nim.

As one example, consider the case $S=\{0,1\}$. When there are two piles, the set of $\CP$-positions is determined as follows. The proof is straightforward and is omitted.

\medskip

\begin{theorem}
Let $S=\{0,1\}$. Consider two-pile Nim in which moves to positions $\bm{x}$ satisfying $|\bm{x}|\in S$ are forbidden. Define the following subsets of $\mathbb{X}$:
\begin{eqnarray*}
P_0&=&\{(0,0), (0,1),(1,0)\},\\
P_{1,0}&=&\{(0,2),(1,1),(2,0)\},\\
P_{1,1}&=&\{(x,x)\mid x\geq 3\}.
\end{eqnarray*}
Let $P_1=P_{1,0}\cup P_{1,1}$. Then $P=P_0\cup P_1$ is the set of $\CP$-positions.
\end{theorem}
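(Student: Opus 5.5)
We follow the scheme used for Theorem 2.3. Since $P_0\subset\mathbb{X}_S$ consists of positions that can only occur as starting positions, it suffices to prove two statements. (i) From any $\bm{x}\in P$, no move reaches a position of $P_1$. (ii) From any $\bm{x}\notin P$, some legal move reaches a position of $P_1$. Positions in $P_1$ have total at least $2$, so they are legal targets automatically. Our first step is to record which moves are legal at all: a move is legal exactly when the resulting total is at least $2$. Hence from $(0,0),(0,1),(1,0)$ there is no legal move, and from $(0,2),(1,1),(2,0)$ there is no legal move either, since every move reduces the total to at most $1$. This already shows that $P_0$ and $P_{1,0}$ are terminal positions and therefore losing for the player to move. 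It also shows that (i) holds trivially for these positions.

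For (i) with $\bm{x}=(x,x)\in P_{1,1}$, $x\ge 3$, any move produces unequal piles, so the result is not in $P_{1,1}$. The only remaining danger is reaching $P_{1,0}$. The targets in $P_{1,0}$ are $(0,2),(2,0),(1,1)$, and each would require both coordinates to be at most $2$ after changing only one pile. That is impossible, since the untouched pile still has $x\ge3$ stones. So (i) holds.

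For (ii), let $(x,y)\notin P$ and assume without loss of generality $x\le y$. We split into cases according to $x$.
\begin{itemize}
\item If $x=y$, then $(x,y)\notin P$ forces $x=2$. From $(2,2)$ we move to $(0,2)$ or $(1,1)$, both in $P_{1,0}$.
\item If $x<y$ and $x\ge3$, we move to $(x,x)\in P_{1,1}$.
\item If $x<y$ and $x\le 2$, we reduce $y$ so that the position lands in $P_{1,0}$. For $x=0$ we need $y\ge3$ (since $(0,1),(0,2)\in P$), and we move to $(0,2)$. For $x=1$ we have $y\ge2$, and we move to $(1,1)$. For $x=2$ we have $y\ge3$, and we move to $(2,0)$ by emptying the $x$-pile instead, which leaves total $y\ge 3$. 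Rather, to land exactly in $P$ we use $(2,y)\to(2,2)$? That target is not in $P$. So for $x=2$, $y\ge3$, the correct move is to reduce $y$ to $0$, giving $(2,0)\in P_{1,0}$, with total $2$, which is legal.
\end{itemize}

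The only real obstacle is the bookkeeping around small positions: checking that $(2,2)$ and the pairs $(2,y)$ with $y\ge 3$ are handled correctly, and that every move we propose leaves a total of at least $2$. The case analysis above covers all of these, completing the proof.
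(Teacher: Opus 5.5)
Your proof is correct and follows the same scheme as the paper's proof of Theorem 2.3 (no legal move from $P$ into $P_1$, and a legal move into $P_1$ from every position outside $P$), which is presumably the ``straightforward'' argument the paper omits. Two small slips to clean up. First, $(1,1)$ is not reachable from $(2,2)$ in one move, since only one pile changes; the move $(2,2)\to(0,2)$ (or $(2,0)$) is what does the work. Second, in the case $x=2$ you should delete the aside about emptying the first pile, since that gives $(0,y)\notin P$; the move $(2,y)\to(2,0)$ you settle on is the correct one.
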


\medskip

When there are three piles under this rule, determining the set of $\CP$-positions is not straightforward. As a step toward its analysis, we establish a result analogous to that for three-pile $5$-FTS Nim. The proof is similar to that of Theorem 4.1.

\medskip

\begin{theorem}
Let $S=\{0,1\}$. Consider three-pile Nim in which moves to positions $\bm{x}$ satisfying $|\bm{x}|\in S$ are forbidden. For every pair $(x,y)$ of nonnegative integers, there exists a unique $z$ such that $(x,y,z)$ is a $\CP$-position and $(x,y,z)\notin \mathbb{X}_S$.
\end{theorem}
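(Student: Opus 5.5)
We follow the proof of Theorem 4.1 and treat uniqueness and existence separately. The only feature of $d$-FTS Nim that argument used is that the forbidden set $\mathbb{X}_S$ leaves infinitely many admissible values of $z$ for each fixed pair $(x,y)$. For $S=\{0,1\}$ this holds even more simply: $(x,y,z)\notin\mathbb{X}_S$ whenever $x+y+z\geq 2$, so every $z\geq 2$ qualifies. Throughout, $P_1$ denotes the set of $\CP$-positions not in $\mathbb{X}_S$.

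\textbf{Uniqueness.} Suppose $(x,y,z)\in P_1$ and $(x,y,z')\in P_1$ with $z>z'$. Then $(x,y,z)\to(x,y,z')$ is a single move, and it is legal because the target is not in $\mathbb{X}_S$. This gives a $\CP$-position a legal move to another $\CP$-position, which is impossible. Hence $z=z'$.

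\textbf{Existence.} Fix $(x,y)$ and suppose no $z$ satisfies $(x,y,z)\in P_1$. Take any $z$ with $(x,y,z)\notin\mathbb{X}_S$, for instance any $z\geq 2$. Then $(x,y,z)$ is an $\CN$-position that is reachable during play, so it has a legal move to some $\CP$-position. The target is not in $\mathbb{X}_S$ because the move is legal, so it lies in $P_1$.

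That move cannot reduce the third pile, since then $(x,y,z')\in P_1$ for some $z'<z$, contrary to our assumption. So the move goes to $(x',y,z)$ with $x'<x$, or to $(x,y',z)$ with $y'<y$. Assign to each $z\geq 2$ the pair $(x',y)$ or $(x,y')$ obtained this way.

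This assignment is injective. If two values $z\neq z''$ gave the same pair, say $(x',y)$, then $(x',y,z)$ and $(x',y,z'')$ would both lie in $P_1$, contradicting uniqueness applied to the pair $(x',y)$. But there are only $x+y$ possible pairs and infinitely many $z\geq 2$, a contradiction. Therefore some $z$ satisfies $(x,y,z)\in P_1$.

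\textbf{Main obstacle.} The only delicate point is the legality bookkeeping. A $\CP$-position lying in $\mathbb{X}_S$, such as $(0,0,1)$, can never be the target of a legal move. So the winning move from an $\CN$-position $(x,y,z)\notin\mathbb{X}_S$ necessarily lands in $P_1$, not merely in $P$. This is also why the statement restricts to positions outside $\mathbb{X}_S$.
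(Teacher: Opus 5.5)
Your proof is correct and takes essentially the same route as the paper, which just adapts the proof of Theorem 4.1. Uniqueness comes from the single move $(x,y,z)\to(x,y,z')$, and existence from injecting the infinitely many admissible $z$ (here any $z\geq 2$) into the $x+y$ pairs $(x',y)$ with $x'<x$ and $(x,y')$ with $y'<y$. Your explicit point that a legal winning move from an $\CN$-position outside $\mathbb{X}_S$ must land in $P_1$ is exactly the bookkeeping the paper leaves implicit.
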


For each pair $(x,y)$, let $f(x,y)$ denote the value of $z$ determined in this way. Then $f(x,y)$ can be characterized as follows.

\medskip

\begin{proposition}
Let $f(0,0)=2,f(1,0)=f(0,1)=1$.
For $x+y\geq 2$, $f(x,y)$ is recursively given by
\[
f(x,y)={\rm mex}\left\{
\begin{array}{ll}
f(x',y) & (0\leq x' < x), \\
f(x,y') & (0\leq y' < y)
\end{array}
\right\}.
\]
\end{proposition}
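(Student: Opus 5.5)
The plan follows the proof of Proposition 4.2, adapted to $S=\{0,1\}$. The forbidden values of $z$ for a given pair $(x,y)$ are exactly those with $x+y+z\in\{0,1\}$. When $x+y\geq 2$ no $z\geq 0$ is forbidden, which explains why the third row of the mex in (4.3) disappears.

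\textbf{Base cases.} We treat these directly, using Theorem 5.2 to know which three-pile positions with a zero pile are $\CP$-positions.

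For $(x,y)=(0,0)$, we need the unique $z$ with $(0,0,z)\in P_1$. Positions $(0,0,0)$ and $(0,0,1)$ lie in $\mathbb{X}_S$. The position $(0,0,2)$ has no legal move, since moving to $(0,0,1)$ or $(0,0,0)$ is forbidden, so it is a $\CP$-position outside $\mathbb{X}_S$. Hence $f(0,0)=2$.

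For $(1,0)$, consider $(1,0,z)$:
\begin{itemize}
\item $(1,0,0)\in\mathbb{X}_S$.
\item $(1,0,1)$ has total $2$, and every move leads to total $1$, which is forbidden. So it has no legal move, it is a $\CP$-position, and $f(1,0)=1$.
\end{itemize}
By symmetry $f(0,1)=1$. Uniqueness in each case is Theorem 5.3, or simply the fact that two positions in $P_1$ differing in one coordinate are joined by a move.

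\textbf{Inductive step for $x+y\geq 2$.} Fix $(x,y)$ and let $q$ be the mex of the displayed set, which is finite, so $q$ exists. I will show that $(x,y,q)\in P_1$; the uniqueness in Theorem 5.3 then gives $f(x,y)=q$.

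First, $(x,y,q)\notin\mathbb{X}_S$, because its total is at least $2$. The same observation shows that every move from $(x,y,q)$ that stays in the third pile lands on $(x,y,z)$ with total $x+y+z\geq 2$, so all such moves are legal. Now check the three kinds of move.
\begin{itemize}
\item A move to $(x,y,z)$ with $z<q$. By the definition of mex, $z=f(x',y)$ for some $x'<x$ or $z=f(x,y')$ for some $y'<y$. Then $(x',y,z)\in P_1$ or $(x,y',z)\in P_1$ is reachable from $(x,y,z)$ by a move. That move is legal because its target is not in $\mathbb{X}_S$. Hence $(x,y,z)\notin P_1$.
\item A move to $(x',y,q)$ with $x'<x$. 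This position is not in $P_1$, because $q\neq f(x',y)$ and $f$ is unique.
\item A move to $(x,y',q)$ with $y'<y$. The same argument applies, using $q\neq f(x,y')$.
\end{itemize}
So no move from $(x,y,q)$ reaches $P_1$. Since $P_0\subseteq\mathbb{X}_S$ cannot be reached by a legal move, every move from $(x,y,q)$ reaches an $\CN$-position. Therefore $(x,y,q)$ is a $\CP$-position outside $\mathbb{X}_S$.

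\textbf{Where care is needed.} The main subtlety is the claim that $(x,y,z)$ with $z<q$ is an $\CN$-position. This requires the witnessing move to $(x',y,z)$ or $(x,y',z)$ to be legal, that is, not to land in $\mathbb{X}_S$. It holds because membership in $P_1$ already excludes $\mathbb{X}_S$. One should also note that $(x,y,z)$ might itself have total $\le 1$ only when $x+y\le 1$, which is precisely why the small cases are handled separately as base cases.
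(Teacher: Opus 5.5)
Your proof is correct and is essentially the argument the paper gives for Proposition 4.2, which it implicitly relies on here. You take $q$ as the mex, use the uniqueness in Theorem 5.3 to show that no legal move from $(x,y,q)$ reaches $P_1$, and note that the third row of (4.3) disappears because $x+y+z\geq 2$ whenever $x+y\geq 2$; your direct check of the base values $f(0,0)=2$ and $f(1,0)=f(0,1)=1$ via positions with no legal move is also fine, and your ``inductive step'' needs no induction hypothesis, since $f$ is already well defined everywhere by Theorem 5.3.
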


\medskip

The values of $f(x,y)$ are shown in the table below. They can be obtained as follows. First, write $2$ in the cell corresponding to $(0,0)$, and write $1$ in the cells corresponding to $(1,0)$ and $(0,1)$. Then, similarly to the computation of Sprague--Grundy values for two-pile Nim, proceed through the cells satisfying $x+y\geq 2$ in increasing order of $x+y$, and in each cell $(x,y)$ write the smallest nonnegative integer that does not appear above it in the same column or to its left in the same row.

\medskip

\begin{center}
\begin{tabular}{|c||c|c|c|c|c|c|c|c|c|c|}\hline
$x\backslash y$&\ 0 \ & \ 1 \ & \ 2 \ & \ 3 \ & \ 4 \ & \ 5 \ & \ 6 \ & \ 7 \ & \ 8 \ & \ 9 \ \\ \hline\hline
0&2&1&0&3&4&5&6&7&8&9\\ \hline
1&1&0&2&4&3&6&5&8&7&10\\ \hline
2&0&2&1&5&6&3&4&9&10&7\\ \hline
3&3&4&5&0&1&2&7&6&9&8\\ \hline
4&4&3&6&1&0&7&2&5&11&12\\ \hline
5&5&6&3&2&7&0&1&4&12&11\\ \hline
6&6&5&4&7&2&1&0&3&13&14\\ \hline
7&7&8&9&6&5&4&3&0&1&2\\ \hline
8&8&7&10&9&11&12&13&1&0&3\\ \hline
9&9&10&7&8&12&11&14&2&3&0\\ \hline
\end{tabular}
\end{center}

\medskip

Since the values of $f(x,y)$ are determined by a structure similar to that of the Sprague--Grundy values for two-pile Nim, one may expect that they can be expressed explicitly by a closed formula involving the nim-sum. However, no such formula has been obtained at present. Nevertheless, as in Proposition 4.3, the value of $f(x,y)$ can be bounded from above and below.

\medskip

\begin{proposition}
If $x+y\geq 3$, then $|x-y|\leq f(x,y)\leq x+y$.
\end{proposition}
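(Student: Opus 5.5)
We argue by induction on $x+y$, following the proof of Proposition 4.3. The upper bound is proved directly. The lower bound is then obtained from the upper bound by permuting coordinates.

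\textbf{Base cases ($x+y=3$).} The table gives $f(0,3)=3$, $f(1,2)=2$, $f(2,1)=2$ and $f(3,0)=3$. Each of these lies in $[|x-y|,x+y]$, so the statement holds when $x+y=3$.

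\textbf{Upper bound.} First we record that $f(x,y)\le x+y+2$ for every pair. This holds for $x+y\le 2$ by inspection of the table. For the general case, the mex in Proposition 5.3 is taken over exactly $x+y$ values, namely $f(x',y)$ for $x'<x$ and $f(x,y')$ for $y'<y$. A mex over $x+y$ values is at most $x+y$.

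Now let $x+y\ge 4$, and assume $f(x',y')\le x'+y'$ for all pairs with $3\le x'+y'<x+y$. To get $f(x,y)\le x+y$ we only need the same counting fact: the excluded set has at most $x+y$ elements. So the mex is at most $x+y$ as soon as $x+y\ge 2$ and the recursion of Proposition 5.3 applies. This means the upper bound needs no induction at all, only that the recursion is valid for $x+y\ge 2$.

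The hypothesis $x+y\ge 3$ is needed only because the initial values $f(0,0)=2$ and $f(1,0)=f(0,1)=1$ exceed $x+y$. The bound fails there: $f(0,0)=2>0$ and $f(1,0)=1$, although $1\le 1$. It also fails at $x+y=2$ for the lower bound, since $f(0,2)=0<2$.

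\textbf{Lower bound.} Set $z=f(x,y)$, so that $(x,y,z)$ is a $\CP$-position with $x+y+z\ge 2$. Since the game is symmetric under permuting piles, $(x,z,y)$ is also a $\CP$-position outside $\mathbb{X}_S$. By the uniqueness in Theorem 5.2, this gives $y=f(x,z)$, and likewise $x=f(y,z)$.

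Suppose without loss of generality that $y\ge x$. We want to show $z\ge y-x$.

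\emph{Case $x+z\ge 2$.} The counting bound above gives $y=f(x,z)\le x+z$, which is $z\ge y-x$.

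\emph{Case $x+z\le 1$.} The pairs $(x,z)$ with $x+z\le 1$ have $f$-values $f(0,0)=2$, $f(1,0)=f(0,1)=1$. So $y\le 2$, and then $x+y\le 3$.
\begin{itemize}
\item If $x+y\ge 4$, this case cannot occur.
\item If $x+y=3$, the pair is $(x,y)=(1,2)$ or $(0,3)$. Both are covered by the base-case check above.
\end{itemize}

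The main care point is this case distinction in the symmetrization: making sure the exceptional small pairs, where $f$ exceeds the sum, only arise when $x+y\le 3$. This is settled by the finite check of the table entries.
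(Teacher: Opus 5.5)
Your proof is correct and follows the paper's route. You first obtain the upper bound from the mex recursion of Proposition 5.3, and then get the lower bound by applying that bound to the permuted $\CP$-positions $(x,f(x,y),y)$ and $(y,f(x,y),x)$, using the uniqueness in Theorem 5.2. The difference is that you prove the upper bound by direct counting (a mex over at most $x+y$ values is at most $x+y$, valid for all $x+y\ge 2$) where the paper inducts on $x+y$. You also treat the case $x+z\le 1$ explicitly, which the paper's sketch passes over; in fact that case forces $x+y\le 2$, so it never occurs.
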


\medskip

The proof of this proposition is also almost the same as that of Proposition 4.3. First, the upper bound can be proved by induction on $x+y$. The lower bound then follows from the equivalence of the statements that $(x,y,f(x,y)), (x,f(x,y),y),(y,f(x,y),x)$
are $\CP$-positions.

\section*{Acknowledgements}
This work was supported by JSPS KAKENHI (Grant Number JP25K15403) and JST SPRING (Grant Number JPMJSP2115).


\end{document}